\newtheorem{thm}{{\sc Theorem}}[section]
\newtheorem{lem}{{\sc Lemma}}[section]
\newtheorem{cor}{{\sc Corollary}}[section]
\begin{document}

\author{V.\,S.~Atabekyan}
\title{Splitting automorphisms of prime power orders of free Burnside groups}


\footnotetext{The author was supported in part by State Committee Science MES RA grant in frame of project 13-1A246}

\maketitle

\begin{abstract}
We prove that if the order of a splitting automorphism of free Burnside group~$B(m,n)$ of odd period~$n\ge1003$ is a prime power, then the automorphism is inner. Thus, we give an affirmative answer to the question on the coincidence of splitting and inner automorphisms of free Burnside groups $B(m,n)$ for automorphisms of orders~$p^k$ ($p$ is a prime number). This question was posed in the Kourovka Notebook in 1990 (see 11th ed., Question 11.36.~b).

\end{abstract}

Keywords: Splitting automorphism, inner automorphism, normal automorphism, free Burnside group

MSC: 20B27, 20F50, 20E36, 20F28

\section{Introduction}\label{s1}

An automorphism $\varphi$ of a group $G$ is called \textit{a splitting auto\-mor\-phism of period  $n$}, if
$\varphi^n=1$ and $g\,g^{\varphi}g^{\varphi^2}\cdot\cdot\cdot
g^{\varphi^{n-1}}=1$ for all $g\in G$. Various authors studied groups with splitting automorphism. The known theorem of O.Kegel, proved in 1961, states that any finite group possessing a nontrivial splitting automorphism of prime order is nilpotent (see \cite{Keg}).
E.Khukhro proved that any solvable group possessing a nontrivial splitting automorphism of prime order is also nilpotent (see \cite{Khu}). E.Jabara in \cite{J} established that a finite group with the splitting automorphism of order 4 is solvable.

It is not difficult to check that if $\varphi$ is a splitting automorphism of period $n$ of $G$, then $g^{\varphi^{n-1}}\cdot\cdot\cdot g^{\varphi^2}g^{\varphi}\,g=1$ for any $g\in G$. Let $\varphi$ be a splitting automorphism of period $n$ of some group $G$. Since $(\varphi\, g)^n=\varphi^ng^{\varphi^{n-1}}\cdot\cdot\cdot
g^{\varphi^2} g^{\varphi}\,g$, this precisely means that the relation $(\varphi\, g)^n=1$ holds in the holomorph $Hol(G)$ of $G$ for each $g\in G$. In particular, the identity automorphism of a given group $G$ is a splitting automorphism of period $n$ if and only if the identity
$x^n=1$ holds in $G$.

It is easy to check that if in a given group $G$ the identity $x^n=1$ is valid, then each inner automorphism of $G$ is an inner splitting automorphism of period $n$. However, the converse statement is false. Indeed, let $F_2$ be an absolutely free group with free generators $a$
and $b$ and let $C_n$ be a cyclic group of order $n$ with generator element $c$. Let us
consider the group $\Gamma = F_2/F_2^n\times C_n$ of period $n$, where $F_2^n$ stands for the subgroup
generated by all possible $n$th powers of the elements of $F_2$. It can readily be seen
that for any $n > 1$, the automorphism $\alpha: \Gamma \to \Gamma$ given on the generators by
the formulae $\alpha(a) = ac$, $\alpha(b) = bc$, and $\alpha(c) = c$, is a splitting automorphism of period $n$.
However, this is an outer automorphism because it is clear that no relation of the
form $u^{-1}au = ac$ can hold in $\Gamma$.

\medskip
S.V.Ivanov in \cite{KT} posed the following problem: \textit{Let $n$ be large enough odd number and $m>1$. Is it true that each automorphism $\varphi$ of $B(m,n)$, that satisfies the relations $\varphi^n=1$ and
$g\,g^{\varphi}g^{\varphi^2}\cdot\cdot\cdot g^{\varphi^{n-1}}=1$ for any $g$ in $ B(m,n)$, is inner} (see \cite{KT}, Question 11.36. b))? In fact, Ivanov's problem concerns splitting automorphisms of period $n$ of the groups $B(m,n)$. By definition, the free Burnside group $B(m,n)$ of period $n$ and rank $m$ has the following presentation
$$B(m,n)=\langle a_1, a_2, ..., a_m \mid X^n=1\rangle ,
$$
where $X$ ranges over the set of all words in the alphabet $\{a_1^{\pm1},a_2^{\pm1},\ldots,a_m^{\pm1}\}$. The group $B(m,n)$ is a quotient group of the free group $F_m$ of rank $m$ by normal subgroup $F_{m}^n$, generated by all $n$th powers of the elements of $F_m$. Every periodic group of period $n$ with $m$ generators is a quotient group of $B(m,n)$.

The main Theorem \ref{t1} of the present paper gives a positive answer to the above mentioned question for all splitting automorphisms of prime power order of free Burnside groups $B(m,n)$ for odd periods $n\ge1003$. In the paper \cite{A13}, we established some properties of splitting automorphisms of the groups $B(m,n)$. Those properties have proved useful in our proof of the main result \ref{t1}. Theorem \ref{t1} strengthens the result of \cite{A13}, where the Ivanov's problem was solved only for automorphisms of prime orders.

\subsection{On simple periodic groups}\label{s2}

S.I.Adian and I.G.Lysenok in \cite{AL} proved, that for any $m>1$ and odd $n\ge 1003$ there exists a maximal normal subgroup $N$ of free Burnside group $B(m,n)$ such that the quotient group $B(m,n)/N$ is an infinite group, every proper subgroup of which is contained in some cyclic subgroup of order $n$. The groups constructed in \cite{AL} are infinite simple groups in which the identity relation $x^n=1$ holds. This groups are called 'Tarski monsters' since  Tarski has formulated a question on the existence of such groups. The first examples of Tarski monsters were constructed by A.Yu.Olshanskii \cite{Olmon} for prime periods $n>10^{75}$. It is now known that for every odd $n\ge 1003$ there are continuum many
non-isomorphic Tarski-monsters of period $n$ (see \cite{At87}, \cite[Theorem 28.7]{O89}, \cite{At07}). Each of these Tarski monsters is a result of a factorization of the group $B(m,n)$ by some maximal normal subgroup of, which leads to an infinite group containing only cyclic proper subgroups.

We denote by $\mathcal{M}_n$ the set of all normal subgroups $N\lhd B(m,n)$ for which the quotient group $B(m,n)/N$ is a Tarski monster.

The following two statements were proved in \cite{At11} (see also \cite{Cher}) and \cite{A13} respectively. They play the key role in the proof of main Theorem \ref{t1}.

\begin{lem}{\rm(see \cite[Corrolary 2]{At11})}\label{pr2}
Let $n\ge1003$ be an odd number and let $\varphi$ be an automorphism of $B(m,n)$ such that for any normal subgroup
$N\in\mathcal{M}_n$ the equality $N^{\varphi}=N$ holds. Then
$\varphi$ is an inner automorphism.
\end{lem}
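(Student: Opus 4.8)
The plan is to reduce the problem to the Tarski monster quotients, where the group structure is extremely rigid, and then to reassemble a global conjugator. The first ingredient I would establish is the separation property $\bigcap_{N\in\mathcal{M}_n}N=\{1\}$: for every nontrivial $g\in B(m,n)$ the Adian--Lysenok construction \cite{AL} produces a Tarski monster quotient in which the image of $g$ is nontrivial, so some $N\in\mathcal{M}_n$ omits $g$. This yields an embedding $B(m,n)\hookrightarrow\prod_{N\in\mathcal{M}_n}T_N$, where $T_N=B(m,n)/N$, and, because $N^{\varphi}=N$ for each $N$, the automorphism $\varphi$ descends to an automorphism $\bar\varphi_N$ of every $T_N$ compatibly with this embedding. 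The target then splits into two tasks: (A) to show that each $\bar\varphi_N$ is an inner automorphism of $T_N$, and (B) to show that the resulting conjugators can be chosen to come from a single element $t\in B(m,n)$, which then realizes $\varphi$ as conjugation by $t$.

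For both tasks I would exploit the fine structure of these groups for $m>1$ and odd $n\ge1003$: $B(m,n)$ and each $T_N$ have trivial centre, the centralizer of any nontrivial element is the unique maximal cyclic subgroup containing it, distinct maximal cyclic subgroups are malnormal and meet trivially, and in a Tarski monster any two non-commuting elements generate the whole group. Task (A) is the heart of the matter: an induced map $\bar\varphi_N$ need not be inner for an arbitrary automorphism of $T_N$, so the argument must use that $\bar\varphi_N$ comes from one global $\varphi$ fixing \emph{all} the quotients simultaneously; I would show that the outer class of $\bar\varphi_N$ in $\mathrm{Out}(T_N)$ is forced to vanish, using the rigidity of the maximal cyclic subgroups and the compatibility of the $\bar\varphi_N$ across $\mathcal{M}_n$. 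Granting (A), write $\bar\varphi_N$ as conjugation by $\bar t_N\in T_N$; since $T_N$ is centreless, $\bar t_N$ is uniquely determined by $\bar\varphi_N$.

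Task (B), the globalization, I would carry out by normalizing $\varphi$ with an inner automorphism so that it fixes a chosen generator, say $a_1^{\varphi}=a_1$ (this preserves the hypothesis, as inner automorphisms fix every normal subgroup). Then in each quotient $\bar t_N$ centralizes $\bar a_1$ and therefore lies in the maximal cyclic subgroup $\langle\bar a_1\rangle$, so $\bar\varphi_N$ is conjugation by a power of $\bar a_1$; comparing the effect on a second, non-commuting generator across all $N$ and using malnormality together with the embedding, I would show these powers are induced by one power of $a_1$, forcing $\varphi$ itself to be inner, with centrelessness guaranteeing uniqueness of the conjugator. The main obstacle is clearly Task (A): upgrading the per-quotient action to an inner one genuinely requires the combinatorial-geometric theory underlying the Adian--Lysenok and Olshanskii constructions, whereas once (A) is in hand the gluing in (B) is comparatively formal.
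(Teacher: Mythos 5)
The first thing to note is that the paper never proves this lemma at all: it is imported wholesale from \cite[Corollary 2]{At11} (see also \cite{Cher}), and the argument there does not follow your scheme --- it works with the global automorphism $\varphi$ directly, using the Adian--Lysenok machinery to produce quotients in $\mathcal{M}_n$ adapted to a putative non-inner automorphism, rather than proving innerness quotient-by-quotient and gluing. Measured as a self-contained proof, your proposal has a genuine gap exactly where the lemma's entire content sits: Task (A). You assert that the outer class of $\bar\varphi_N$ in $\mathrm{Out}(T_N)$ ``is forced to vanish'' by rigidity and compatibility across $\mathcal{M}_n$, but you give no mechanism, and none of the structural facts you list (cyclic centralizers, malnormality, two non-commuting elements generate $T_N$) yields this; you yourself concede the step ``genuinely requires the combinatorial-geometric theory'' of \cite{AL} and Olshanskii. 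Deferring (A) is not deferring a detail --- it is deferring the theorem, so the proposal is a reduction framework with nothing at its core.

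Task (B) also does not go through as written. First, the normalization ``compose $\varphi$ with an inner automorphism so that $a_1^{\varphi}=a_1$'' requires $\varphi(a_1)$ to be conjugate to $a_1$ \emph{in} $B(m,n)$, which is unknown at that stage: even granting (A), you only know the images are conjugate in each $T_N$, by conjugators $\bar t_N$ that need not cohere, and conjugacy in every quotient does not imply conjugacy upstairs --- so this step presupposes a statement of the same nature as the conclusion. Second, the inclusion $\bar t_N\in\langle\bar a_1\rangle$ needs $\bar a_1$ to have order $n$ in $T_N$ (so that $\langle\bar a_1\rangle$ is itself a maximal cyclic subgroup), which is not automatic for an abstract $N\in\mathcal{M}_n$. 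Third, the final gluing is not ``comparatively formal.'' Granting everything up to that point, you have $\varphi(a_j)\equiv a_1^{s_N}a_ja_1^{-s_N}\pmod{N}$ with an exponent $s_N$ depending on $N$, and you must produce a single $s$. Pairwise intersections give no contradiction: for distinct $N,N'\in\mathcal{M}_n$ one has $B(m,n)/(N\cap N')\cong T_N\times T_{N'}$, and conjugation by $(\bar a_1^{\,s_N},\bar a_1^{\,s_{N'}})$ \emph{is} conjugation by an element of the image of $B(m,n)$, so nothing forces $s_N=s_{N'}$. What the gluing actually needs is a simultaneous-survival property --- for any finite set of nontrivial elements of $B(m,n)$ (e.g.\ the elements $\varphi(a_j)^{-1}a_1^{s}a_ja_1^{-s}$, $0\le s<n$) there is a single $N\in\mathcal{M}_n$ containing none of them --- which is strictly stronger than the separation $\bigcap_{N\in\mathcal{M}_n}N=\{1\}$ you invoke, and, like that separation itself, is asserted without proof and again requires the deep machinery you set aside.
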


\begin{lem}{\rm(see \cite[Lemma 4]{A13})}\label{triv}
If $\varphi$ is an arbitrary nontrivial splitting automorphism of period $n$ of the group $B(m,n)$, where $n\ge100 3$ is odd, then the stabilizer of any normal subgroup $N\in \mathcal{M}_n$ under the action of the cyclic group $\langle\varphi\rangle$ is nontrivial.
\end{lem}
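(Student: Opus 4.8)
The plan is to argue by contradiction. Assume the stabilizer of $N$ in $\langle\varphi\rangle$ is trivial, and set $d=|\varphi|$; since $\varphi\neq1$ and $\varphi^n=1$ we have $1<d$ and $d\mid n$, so that $s:=n/d$ satisfies $1\le s<n$. Writing $B=B(m,n)$, $N_j=N^{\varphi^j}$ and $T_j=B/N_j$, triviality of the stabilizer means that $N_0,N_1,\dots,N_{d-1}$ are $d$ pairwise distinct normal subgroups, cyclically permuted by $\varphi$ (so $N_j^{\varphi}=N_{j+1}$, indices mod $d$, using $\varphi^d=1$), and each $T_j$ is a Tarski monster, hence an infinite non-abelian simple group; in particular each $N_j$ is a maximal normal subgroup of $B$.

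The heart of the argument is to produce a single element $g\in B$ whose images in the various quotients $T_j$ are prescribed independently. Concretely, I would first show that the diagonal map $B\to\prod_{j=0}^{d-1}T_j$ is surjective, equivalently that $\bigcap_{j\ge1}N_j\not\subseteq N_0$. This Chinese-Remainder-type statement is the step I expect to be the main obstacle: it is genuinely false for pairwise comaximal normal subgroups with abelian simple quotients (for instance three distinct index-$p$ subgroups of $C_p\times C_p$), so the proof must use that the $T_j$ are non-abelian. I would establish it by induction on $d$, invoking the structural fact that a normal subgroup of a direct product of non-abelian simple groups is itself a sub-product of the factors: if $N_0\bigl(\bigcap_{j\ge1}N_j\bigr)\neq B$, then its image in $B/\bigcap_{j\ge1}N_j\cong\prod_{j\ge1}T_j$ omits some factor $T_{j_0}$, which forces $N_0\subseteq N_{j_0}$ and hence, by maximality and distinctness, the contradiction $N_0=N_{j_0}$.

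Granting surjectivity, I would choose $t\in T_0$ of order $n$ (such an element exists because the Tarski monster $T_0$ contains cyclic subgroups of order $n$) and pick $g\in B$ with $gN_0=t$ and $g\in N_j$ for $1\le j\le d-1$. Projecting the splitting identity $g\,g^{\varphi}\cdots g^{\varphi^{n-1}}=1$ into $T_0=B/N_0$ then collapses almost all factors: for $k\not\equiv0\pmod d$ one has $g^{\varphi^k}\in N_0$ because $g\in N_{(-k)\bmod d}$ and $N_{(-k)\bmod d}^{\varphi^k}=N_0$, while for $k=rd$ one has $g^{\varphi^{rd}}=g$ since $\varphi^d=1$, so the surviving factor is $t$. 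As exactly $s$ indices $k\in\{0,\dots,n-1\}$ are divisible by $d$, the projected relation reads $t^{s}=1$ in $T_0$. Since $t$ has order $n$, this gives $n\mid s$, contradicting $s=n/d<n$. This contradiction shows that the stabilizer of $N$ must be nontrivial.
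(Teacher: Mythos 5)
Your proof is correct and is essentially the same argument that underlies this lemma in the paper's source: assuming the orbit of $N$ under $\langle\varphi\rangle$ is regular (size $d=|\varphi|$), you decompose $B(m,n)/\bigcap_{j}N_j$ into the direct product of the Tarski-monster quotients, choose an element mapping to an order-$n$ element $t$ in one factor and trivially into the others, and project the splitting identity to get $t^{n/d}=1$, a contradiction. The Chinese-remainder-type step you flagged as the main obstacle is exactly the paper's Lemma \ref{R} (cited from \cite[Lemma 3]{A13}), so you could have invoked it directly; your inductive derivation of it, via the fact that normal subgroups of a finite direct product of non-abelian simple groups are sub-products, is the standard proof of that lemma and is sound.
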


\section{The main result}\label{s5}
\begin{thm}
\label{t1}
Let $\phi$ be a splitting automorphism of period $n$ of the group $B(m,n)$, where $n\ge1003$ is an odd number. If the order of automorphism $\phi$ is a power of some prime number, then $\phi$ is an inner automorphism.
\end{thm}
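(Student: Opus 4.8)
The plan is to leverage the two lemmas stated in the excerpt, reducing the problem to showing that a splitting automorphism of prime power order $p^k$ must stabilize every normal subgroup $N\in\mathcal M_n$. Once that stabilization is established for all $N\in\mathcal M_n$, Lemma~\ref{pr2} immediately yields that $\phi$ is inner, which is exactly the conclusion we want. So the entire burden of the proof is to upgrade Lemma~\ref{triv} — which only guarantees that the stabilizer of each $N$ in $\langle\phi\rangle$ is \emph{nontrivial} — to the statement that the stabilizer is \emph{all} of $\langle\phi\rangle$.

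\medskip
First I would fix an arbitrary $N\in\mathcal M_n$ and consider the orbit of $N$ under the cyclic group $\langle\phi\rangle$, which has order $p^k$. The stabilizer $\mathrm{Stab}(N)=\{\phi^i: N^{\phi^i}=N\}$ is a subgroup of the cyclic group $\langle\phi\rangle\cong\mathbb Z/p^k\mathbb Z$, and by Lemma~\ref{triv} it is nontrivial. Here is where the prime-power hypothesis does all the work: in a cyclic $p$-group, every nontrivial subgroup contains the unique subgroup of order $p$, and more importantly the lattice of subgroups is a single chain $1<\langle\phi^{p^{k-1}}\rangle<\cdots<\langle\phi^{p}\rangle<\langle\phi\rangle$. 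I would aim to show that $\mathrm{Stab}(N)$ cannot be a proper subgroup, i.e.\ that it must be the whole group. The natural route is to suppose for contradiction that $\mathrm{Stab}(N)=\langle\phi^{p^j}\rangle$ with $j\ge1$, so that the orbit of $N$ has size $p^j>1$.

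\medskip
The key observation I would try to exploit is that the restriction (or rather the induced action) on a suitable object lets one build, from $\phi$ and a nontrivially moved $N$, a new splitting automorphism whose stabilizer behaviour contradicts Lemma~\ref{triv} applied in a smaller or quotient situation. Concretely, if the orbit $\{N,N^\phi,\dots,N^{\phi^{p^j-1}}\}$ has size $p^j$, then the subgroup $H=\mathrm{Stab}(N)=\langle\phi^{p^j}\rangle$ is a splitting automorphism group of period $n$ acting on $B(m,n)$ and fixing $N$; passing to the quotient $B(m,n)/\bigcap_i N^{\phi^i}$ or to $B(m,n)/N$ itself, one obtains an action on a Tarski monster. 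Since a Tarski monster is an infinite simple group all of whose proper subgroups are cyclic of order dividing $n$, its automorphism group and the structure of splitting automorphisms there are highly constrained; I would use the fact (from \cite{A13}) that a nontrivial power of a splitting automorphism is again a splitting automorphism, together with the rigidity of the Tarski monster, to force $j=0$.

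\medskip
The main obstacle, as I see it, is precisely this last step: extracting a genuine contradiction from a proper-orbit assumption rather than merely reproving nontriviality of the stabilizer. Lemma~\ref{triv} already gives nontriviality for free, so the real content must come from an argument that is sensitive to the full cyclic $p$-group structure — presumably an inductive argument on $k$, where one shows that $\phi^{p}$ stabilizes each $N\in\mathcal M_n$ (by the induction hypothesis applied to the prime-power order $p^{k-1}$), and then promotes this to $\phi$ itself using the chain structure of subgroups of $\langle\phi\rangle$ together with Lemma~\ref{triv}. The delicate point will be ensuring that $\phi^p$ (or the relevant power) is still a \emph{splitting} automorphism of period $n$ so that the induction hypothesis legitimately applies, and that the nontrivial stabilizer guaranteed by Lemma~\ref{triv} is forced to sit at the \emph{top} of the subgroup chain rather than somewhere in the middle. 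I expect that reconciling these two facts — the guaranteed nontriviality and the need for full stabilization — via the unique-chain property of cyclic $p$-groups is the crux of the whole argument.
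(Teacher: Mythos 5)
Your opening reduction coincides with the paper's: by Lemma \ref{pr2} it suffices to show every $N\in\mathcal{M}_n$ is $\phi$-invariant, and the chain structure of the subgroups of the cyclic $p$-group $\langle\phi\rangle$ is indeed where the prime-power hypothesis enters. But your proof stops exactly where the real work begins. You assume $\mathrm{Stab}(N)=\langle\phi^{p^j}\rangle$ is proper and then describe, in the conditional mood, two possible routes to a contradiction; neither is carried out, and neither is viable as stated. The inductive route needs $\phi^p$ to be a splitting automorphism of period $n$; this is not established (the identity $g\,g^{\phi}\cdots g^{\phi^{n-1}}=1$ does not obviously pass to powers of $\phi$, and the paper never uses or proves such a fact). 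Worse, even granting it, the induction would only yield that $\langle\phi^{p}\rangle$ stabilizes every member of $\mathcal{M}_n$, i.e.\ that every stabilizer sits at $\langle\phi^{p}\rangle$ or above; at that point Lemma \ref{triv} adds nothing, since those stabilizers are already nontrivial, and no mechanism remains to push them to the top of the chain --- a difficulty you yourself acknowledge. The ``rigidity of Tarski monsters'' route is only named, not argued. So there is a genuine gap: the contradiction is never derived.

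What the paper actually does at this point is the heart of the proof and is absent from your proposal. Among all non-$\phi$-invariant members of $\mathcal{M}_n$ it picks one, $N$, whose stabilizer $\langle\phi^{p^k}\rangle$ is \emph{minimal}; by minimality and the chain structure, $\phi^{p^k}$ stabilizes \emph{every} member of $\mathcal{M}_n$, so Lemma \ref{pr2} --- applied a second time, now to $\phi^{p^k}$ rather than to $\phi$ --- makes $\phi^{p^k}=i_u$ inner, and Adian's theorem on the triviality of the center of $B(m,n)$ forces $u^{p^{r-k}}=1$. Then Lemma \ref{R} decomposes $B(m,n)/K$, where $K=\bigcap_i N^{\phi^i}$, into $p^k$ direct factors $N_j/K$, each a Tarski monster; one writes $uK=u_0K\cdots u_{p^k-1}K$, chooses $a$ with $aK\in N_0/K$ such that $au_0K$ has order $n$, and feeds the splitting identity $a\,a^{\phi}\cdots a^{\phi^{n-1}}=1$ into the direct product: grouping the $n$ factors into $p^k$ blocks of length $n/p^k$, each block lands in a distinct direct factor, so every block is trivial; computing the $N_0$-block via $\phi^{p^k}=i_u$ and the commutation relations \eqref{u0} gives $(au_0)^{n/p^k}K=K$, contradicting that $au_0K$ has order $n$. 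None of these ingredients --- the minimality choice, the second application of Lemma \ref{pr2} to make a proper power of $\phi$ inner, the trivial center, or the block regrouping of the splitting identity inside the direct product --- appears in your sketch, and without them the argument does not close.
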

\begin{proof} According to Lemma \ref{pr2}, to prove the Theorem \ref{s5} it suffices to show that the equality $N^{\phi}=N$ holds for any normal subgroup $N\in\mathcal{M}_n$.

Suppose that there is a normal subgroup $A\in\mathcal{M}_n$ which is not $\phi$-invariant, that is $A^{\phi}\not=A$. On the other hand, by Lemma \ref{triv} the centralizer of each such subgroup $A$ is not trivial. Let $p^r$ be the order of automorphism $\phi$, where $p$ is some prime number. The number $p^r$ divides $n$ by definition.

Since the subgroups of the cyclic group $\langle\phi\rangle$ of order $p^r$ are linearly ordered by inclusion, one can choose some subgroup $N$ with the minimal centralizer among all non-$\phi$-invariant subgroups $A\in\mathcal{M}_n$. Being a subgroup of the group $\langle\phi\rangle$ of order $p^r$, this minimal nontrivial centralizer  is generated by some automorphism of the form $\phi^{p^k}$, where $1<k<r$.

By virtue of the minimality, the subgroup $\langle\phi^{p^k}\rangle$ is contained in the centralizer of any subgroup $A\in\mathcal{M}_n$. Hence, the automorphism $\phi^{p^k}$ centralizes all subgroups $A\in\mathcal{M}_n$. According to Lemma \ref{pr2} we obtain that automorphism $\phi^{p^k}$ is inner.

We shall use the following lemma, that was proved in the paper \cite{A13}.

\begin{lem}{\rm(see \cite[Lemma 3]{A13})}\label{R}
Let $\varphi:G\to G$ be an arbitrary automorphism and $H$ be a normal subgroup of the group $G$ such that the quotient group $G/H$ is a non-abelian and simple. In that case, if the subgroups $H,\,H^{\varphi}\,,...,H^{\varphi^{k-1}}$ are pairwise distinct and $H^{\varphi^{k}} =H$, then the quotient group
$G/\mathop\cap\limits_{i=1}^kH^{\varphi^{i}}$ is decomposed into the direct product of normal subgroups $H_j/\mathop\cap\limits_{i=1}^kH^{\varphi^{i}}$, $j=1,2,...,k$, wherein each quotient group $H_j/\mathop\cap\limits_{i=1}^kH^{\varphi^{i}}$ is isomorphic to $G/H$ and $H_j=\mathop\cap\limits_{\mathop{i=1}\limits_{i\not=j}}^kH^{\varphi^{i}}$.
\end{lem}

To use Lemma \ref{R}, suppose that $G=B(m,n)$,
$\varphi=\phi$ and let $H=N$ be the normal subgroup of the group $B(m,n)$ with the minimal normalizer $\langle\phi^{p^k}\rangle$. The quotient group $B(m,n)/N$ is a Tarski monster, since $N\in\mathcal{M}_n$. In particular, $B(m,n)/N$ is a non-abelian and simple group. By Lemma \ref{R}, the quotient group $B(m,n)/K$ is decomposed into the direct product of subgroups $N_0/K$, $N_1/K$, ... , $N_{p^k-1}/K$, where $K=\mathop\cap\limits_{i=0}^{p^k-1}N^{\phi^{i}}$.

As it was mentioned above, the automorphism $\phi^{p^k}$ is inner, that is for some element $u\in B(m,n)$ we have the equality $\phi^{p^k}=i_u$, where $i_u$ is the inner automorphism generated by $u$. Since the automorphism $\phi^{p^k}$ has order $p^{r-k}$, the relation $(i_u)^{p^{r-k}}(x)=x$ holds for any $x\in B(m,n)$. In the other words the element $u^{p^{r-k}}$ belongs to the center of $B(m,n)$. From Adian's theorem on the triviality of the center of the group $B(m,n)$ (see \cite[Theorem 3.4]{A}) it follows the equality $u^{p^{r-k}}=1.$ Moreover, the element $u$ has order $p^{r-k}$ because the automorphism $\phi^{p^k}$ has order $p^{r-k}$. 

For the element $uK$
of the group $B(m,n)/K$ there exist uniquely defined elements $u_0K,$ $u_1K,..., u_{p^k-1}K$ from the subgroups $N_0/K$, $N_1/K$, ... , $N_{p^k-1}/K$ respectively such that
\begin{equation}\label{uK}uK=u_0K\cdot u_1K\cdot\cdot\cdot u_{p^k-1}K.\end{equation}

Note that the relation $u^{p^{r-k}}=1$  implies immediately the relations $u_i^{p^{r-k}}K=K$ for all $i=0, 1,...,p^k-1.$ In particular, we have $u_0^{p^{r-k}}K=K.$

According to Lemma \ref{R} the groups $N_0/K$ and $B(m,n)/N$ are isomorphic. By definition of the set $\mathcal{M}_n$ any element of the quotient group $B(m,n)/N$ is contained in some cyclic subgroup of order $n$ (see \cite[Proposition 5.2]{AL}). Therefore, there exists an element $a\in B(m,n)$ such that $ aK\in N_0/K$ and the element $au_0K$ has the order $n$.

Besides, the elements $u_1K,\,\cdot\cdot\cdot, u_{p^k-1}K$ commute with both of the elements $aK$ and $u_0K$ in $B(m,n)/K$ since $u_1K,\,\cdot\cdot\cdot, u_{p^k-1}K$ belong to the direct factors $N_1/K$, ... , $N_{p^k-1}/K$ respectively. Hence, the equality \eqref{uK} implies the relations
\begin{equation}\label{u0} u^sau^{-s}K=u_0^sau_0^{-s}K
\end{equation}
for all integers $s$. Since $\phi$ is a splitting automorphism, we have the equality $a\,a^{\phi}a^{\phi^2}\cdot\cdot\cdot a^{\phi^{n-1}}=1$.

Taking into account $\phi$-invariance of the subgroup $K$, we obtain that the equality
\begin{equation}\label{aK}aK\cdot a^{\phi}K\cdot
a^{\phi^2}K\cdot\cdot\cdot a^{\phi^{n-1}}K=K\end{equation} holds in the quotient group $B(m,n)/K$.

Recall that according to choice of subgroup $N$ we have $a^{\phi^{p^k}}K\in N_0/K$. This allows us to rewrite the equality
\eqref{aK} in the form
\begin{equation}
\label{aK1}
bK\cdot b^{\phi}K\cdot b^{\phi^2}K\cdot\cdot\cdot b^{\phi^{p^k-1}}K=K,
\end{equation}
where
\begin{equation}\label{b}
b=a a^{\phi^{p^k}} a^{\phi^{2p^k}}\cdot\cdot\cdot
a^{\phi^{(n/p^{k}-1)p^k}}.
\end{equation}
Moreover, it is easy to see that the elements $bK,\, b^{\phi}K,\, b^{\phi^2}K,\cdot\cdot\cdot,\,
b^{\phi^{p^k-1}}K$ belong to the direct components $N_0/K$, $N_1/K$,
... , $N_{p^k-1}/K$ respectively. Hence, the equality \eqref{aK1} immediately implies that all the factors on the left-hand side of the equality \eqref{aK1} are trivial. In particular, we have the equality $bK=K$.

Since $\phi^{p^k}=i_u$, one can rewrite the equality \eqref{b} in the form
$$b=a\cdot uau^{-1}\cdot u^2au^{-2}\cdot
u^{n/p^{k}-1}au^{-(n/p^{k}-1)}.
$$
Accordingly, we obtain equality
$$bK=aK\cdot uau^{-1}K\cdot u^2au^{-2}K\cdot\cdot\cdot
u^{n/p^{k}-1}au^{-(n/p^{k}-1)}K$$ in the quotient group. Then using the relations \eqref{u0}, we get the equality
$$ bK=a\cdot u_0au_0^{-1}\cdot u_0^2au_0^{-2}\cdot\cdot\cdot
u_0^{n/p^{k}-1}au_0^{-(n/p^{k}-1)}K.$$

Next, we use the following identity
$$ a\cdot u_0au_0^{-1}\cdot
u_0^2au_0^{-2}\cdot\cdot\cdot
u_0^{n/p^{k}-1}au_0^{-(n/p^{k}-1)}=(au_0)^{n/p^{k}}\cdot
u_0^{-n/p^{k}}.$$
The number $n/p^{k}$ is divided by $p^{r-k}$  since $n$ is divided by $p^r$. By virtue of the equality $u_0^{p^{r-k}}K=K,$
we finally obtain $bK=(au_0)^{n/p^{k}}K$. Hence, we obtain
$(au_0)^{n/p^{k}}K=K$ because $bK=K$.  This contradicts to condition of choice of the the element $a$, according to which the element $au_0K$ has order $n$ in the group $B(m,n)/K$. Theorem is proved.

\begin{cor}
\label{t1}
If $p$ is an odd prime number and $n=p^k\ge1003$, then any splitting automorphism of period $n$ of the group $B(m,n)$ is an inner automorphism.
\end{cor}

\end{proof}

\end{document}